\newtheorem{theorem}{Theorem}
\newtheorem{lemma}[theorem]{Lemma}
\newtheorem{definition}[theorem]{Definition}
\newtheorem{example}[theorem]{Example}
\newcommand{\R}{{\mathbf R}}
\newcommand{\Z}{{\mathbf Z}}
\newcommand{\Q}{\mathbf Q}
\newcommand{\E}{{\mathbb E}}
\renewcommand{\P}{\mathbb P}
\newcommand{\LL}{\mathcal L}
\newcommand{\cd}{\rm {cd}}
\newcommand{\st}{\rm {St}}
\begin{document}

\title{The asphericity of random 2-dimensional complexes}         
\author{A.E. Costa and M. Farber}        
\date{October 22, 2012}          
\maketitle
\abstract{ We study random 2-dimensional complexes in the Linial - Meshulam model and prove that for the probability parameter satisfying $$p\ll n^{-46/47}$$ a random 2-complex
 $Y$ contains several pairwise disjoint tetrahedra such that the 2-complex $Z$ obtained by removing any face from each of these tetrahedra is aspherical. Moreover, we prove that the obtained complex $Z$ satisfies the Whitehead conjecture, i.e. any subcomplex $Z'\subset Z$ is aspherical. 
This implies that $Y$ is homotopy equivalent to a wedge $Z\vee S^2\vee \dots \vee S^2$ where $Z$ is a 2-dimensional aspherical simplicial complex. 
We also show that under the assumptions $$c/n<p<n^{-1+\epsilon},$$ where $c>3$ and $0<\epsilon<1/47$, the complex 
$Z$ is genuinely 2-dimensional and in particular, it has sizable 2-dimensional homology; it follows that in the indicated range of the probability parameter $p$ the cohomological dimension of the fundamental group $\pi_1(Y)$ of a random 2-complex equals 2.
}

\section{Introduction}

The problem of modeling of large systems motivates the development of mixed probabilistic - topological concepts, 
including high-dimensional generalisations of the Erd\"os and R\'enyi random graphs of  \cite{ER}.
A probabilstic model of this type was recently suggested and studied by Linial-Meshulam ~\cite{LM} and 
Meshulam-Wallach ~\cite{MW}. In this model one generates a random $d$-dimensional complex $Y$ by considering the full $d$-dimensional skeleton of the simplex 
$\Delta_n$ on vertices $\{1, \dots, n\}$ and retaining $d$-dimensional faces independently with probability $p$.  
The work of Linial--Meshulam and Meshulam--Wallach provides threshold functions for the vanishing of the $(d-1)$-st homology groups of random complexes with coefficients in a finite abelian group. Threshold functions for the vanishing of the $d$-th homology groups were subsequently studied by Kozlov \cite{Ko}. 


%

The fundamental group of a random 2-complexes $Y$ was investigated by Babson, Hoffman, and Kahle \cite{BHK}.  
They showed that the fundamental group of a random 2-complex is either Gromov hyperbolic or trivial, except when the probability parameter satisfies 
$p\sim n^{-1/2}$. 

In the paper \cite{CCFK} it was proven that a random 2-complex $Y$ collapses to a graph if $p\ll n^{-1}$ and therefore its fundamental group is free. 

The preprint \cite{ALLM} suggests an explicit constant $\gamma$ such that for any $c<\gamma$ a random $Y\in Y(n, c/n)$ is either collapsible to a graph or it contains a  tetrahedron.

For several reasons it is important to have a model producing random {\it aspherical} 2-dimensional complexes $Y$. 
A connected simplicial complex $Y$ is said to be aspherical if $\pi_i(Y)=0$ for all $i\ge 2$; this is equivalent to the requirement that the universal cover of $Y$ is contractible. 
It is well-known that for 2-dimensional complexes $Y$ the asphericity is equivalent to the vanishing of the second homotopy group $\pi_2(Y)=0$, or equivalently, any continuous map 
$S^2\to Y$ is homotopic to a constant map. 

Random aspherical 2-complexes could be helpful for testing probabilistically the open problems of two-dimensional topology, such as the Whitehead conjecture. 
This conjecture stated by J.H.C. Whitehead in 1941 claims that a subcomplex of an aspherical 2-complex is also aspherical. 
Surveys of results related to the Whitehead conjecture can be found in \cite{B}, \cite{R}. 

Unfortunately, in the Linial-Meshulam model a random 2-complex is aspherical only if $p\ll n^{-1}$, when it is homotopy equivalent to a graph; however for 
$p\gg n^{-1}$ a random 2-complex is never aspherical since it contains a tetrahedron as a subcomplex. 
For this reason in this paper we modify the notion of asphericity and speak about {\it asphericable} complexes, which can be made aspherical by deleting faces without affecting their fundamental groups (see the precise definition below). 
Our main result Theorem \ref{thm1} states that random 2-complexes in the Linial - Meshulam model are asphericable in the range $p\ll n^{-46/47}$. This implies, in particular, that 
for $c>3$ and $0<\epsilon <1/47$ 
the cohomological dimension of the fundamental group of a random 2-complex equals 2 assuming that $c/n <p<n^{-1+\epsilon}$. 
We refer the reader to \cite{Br} for the definition of the notion of cohomological dimension. 

The method of the proof of Theorem \ref{thm1} is combinatorial and is based on the observation that a large proportion of vertices of any triangulation of a surface have small degree (Lemma \ref{lm1}); moreover, one may find a finite set of local structures which are present in any simplicial sphere (Theorem \ref{comb}). This allows
finding a finite list of 2-complexes which are present in any regular quotient of a simplicial 2-sphere, see Theorem \ref{thmlist}. Thus the complexes which have no 
fragments from this finite list are asphericable; the proof of the main result employs the technique of containment problem for random 2-complexes.

\section{The main results} 

 In the paper, we use standard terminology regarding simplicial complexes, see \cite{Hat02}; 1-simplexes are also called edges; 2-simplexes are also referred to as faces. A triangulation of a surface is a simplicial complex homeomorphic to the surface. 

\begin{definition}\label{def1} {\rm A finite simplicial 2-dimensional complex $Y$ will be called} {\it asphericable} {\rm if $Y$ contains a set of pairwise face disjoint tetrahedra 
$T_1, \dots, T_k\subset Y$ such that any subcomplex $Z\subset Y$ with the property that $T_i\not\subset Z$ for all $i=1, \dots, k$ is aspherical.} 
\end{definition} 

By a {\it tetrahedron} we mean a complex isomorphic to the boundary of the 3-simplex, i.e. a simplicial complex with four vertices and four faces. 
Two subcomplexes of a complex are called {\it face disjoint} if they have no common faces. 

Any subcomplex of an asphericable complex is also asphericable. 

Note that asphericable complexes without tetrahedra are exactly the complexes satisfying the Whitehead conjecture, i.e. they are 2-complexes which are aspherical and such that all their subcomplexes are aspherical.  

If $Y$ is asphericable then the subcomplex $Z\subset Y$ obtained from $Y$ by removing a face from each of the tetrahedra $T_1, \dots, T_k$ is aspherical.\footnote{This explains our term \lq\lq asphericable\rq\rq\, which intends to mean \lq\lq can be made aspherical\rq\rq.} 
It is clear that in this case $Y$ has the homotopy type of the wedge 
\begin{eqnarray}\label{wedge}
Z\vee S^2\vee \cdots\vee S^2\end{eqnarray} of $Z$ and $k$ spheres $S^2$. 
This implies that for an asphericable 2-complex $Y$ the fundamental group $\pi_1(Y)$ has cohomological dimension less or equal than $2$. Indeed, $\pi_1(Y)=\pi_1(Z)$ and $Z$ is aspherical and 2-dimensional and therefore the chain complex of 
the universal cover of $Z$ is a free resolution of the infinite cyclic group $\Z$ over the group ring $\Z[\pi_1(Y)]$ having length 2; therefore the cohomological dimension of $\pi_1(Y)$ is $\le 2$. From the wedge decomposition (\ref{wedge}) one also sees that the universal cover of $Y$ is equivariantly homotopy equivalent to a wedge of 2-spheres and the fundamental group acts on these spheres by permutations; hence we obtain that the second homotopy group
$\pi_2(Y)$ is free as module over the group ring of $\pi_1(Y)$.

Consider the Linial-Meshulam model $Y(n,p)$ of random 2-complexes \cite{LM}. Recall that $Y(n,p)$ is the probability space of all 2-complexes
$Y$ on the vertex set $\{1, \dots, n\}$ containing the full 1-skeleton, i.e. 
$$\Delta_n^{(1)}\subset Y \subset \Delta_n^{(2)}$$
(where $\Delta_n$ is the simplex with vertices $\{1, \dots, n\}$) and the probability function $\P: Y(n,p)\to \R$ is given by the formula
$$\P(Y)= p^{f_2(Y)}(1-p)^{{\binom n 3}-f_2(Y)}$$
where $f_2(Y)$ denotes the number of 2-simplexes in $Y$.

From \cite{CCFK} we know that for $p\ll n^{-1}$ a random 2-complex $Y\in Y(n, p)$ collapses simplicially to a graph, a.a.s. This 
implies that for $p\ll n^{-1}$ a random 2-complex is aspherical and moreover
the fundamental group $\pi_1(Y)$ is free, a.a.s. 

The main result of this paper states:

\begin{theorem}\label{thm1} Assume that \begin{eqnarray}\label{one} p \ll n^{-\frac{46}{47}}.\end{eqnarray} 
Then a random 2-complex $Y\in Y(n,p)$ is asphericable, a.a.s.\footnote{The symbol a.a.s. is an abbreviation of \lq\lq asymptotically almost surely\rq\rq, which means that the probability that the corresponding statement is valid tends to 1 as $n\to \infty$.}
\end{theorem}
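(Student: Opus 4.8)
The plan is to reduce the asphericability of $Y$ to the absence of certain "forbidden" subcomplexes, and then to show via a first-moment (containment) argument that a random $Y\in Y(n,p)$ contains none of these forbidden pieces when $p\ll n^{-46/47}$. The starting point is the characterization of asphericity for $2$-complexes recalled in the introduction: a $2$-complex is aspherical iff $\pi_2=0$ iff every map $S^2\to Y$ is null-homotopic. By a standard simplicial-approximation and minimal-area argument (in the spirit of the proof that a $2$-complex with no "spherical" structure is aspherical), if $Y$ fails to be asphericable then, after deleting a maximal collection of face-disjoint tetrahedra, the resulting complex $Z$ still carries a nontrivial spherical picture; minimizing area produces a nonsingular (or at worst controlled) simplicial map from a simplicial $2$-sphere $S$ onto a subcomplex of $Z$ that is a \emph{regular quotient} of $S$. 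Thus it suffices to show that $Z$, and hence $Y$, contains some subcomplex appearing on the finite list of Theorem \ref{thmlist} of $2$-complexes that occur in any regular quotient of a simplicial $2$-sphere.

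The next step is to control the parameters of the complexes on that list. Here is where Lemma \ref{lm1} and Theorem \ref{comb} enter: a positive proportion of the vertices of any triangulated surface have bounded degree, and every simplicial sphere contains one of finitely many local configurations. Feeding these local structures into the regular-quotient construction of Theorem \ref{thmlist} yields an explicit finite family $\mathcal{F}$ of $2$-complexes, each with a bounded number of vertices $v$ and faces $f$, such that any non-asphericable $2$-complex contains a member of $\mathcal{F}$ as a subcomplex. The crucial numerical input, which I expect to be extracted from the same combinatorial analysis, is a lower bound of the form $f/v \ge 47/46$ (equivalently $v/f \le 46/47$) for every $F\in\mathcal{F}$; this is precisely the density threshold that matches the exponent $46/47$ in the hypothesis \eqref{one}, and identifying the extremal configuration achieving this ratio is the combinatorial heart of the argument.

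Granting this, the probabilistic step is routine. For a fixed $2$-complex $F$ with $v$ vertices and $f$ faces, the expected number of (not necessarily induced) copies of $F$ in $Y(n,p)$ is at most $\binom{n}{v}\cdot c_F\cdot p^{f} = O(n^{v} p^{f})$, where $c_F$ counts labelings. If $p = o(n^{-46/47})$ and $f/v\ge 47/46$, then $n^v p^f = n^v\cdot o(n^{-46f/47}) = o(n^{v - 46f/47}) \le o(1)$ since $v - 46f/47 \le 0$. Summing over the finitely many $F\in\mathcal{F}$ (a constant, independent of $n$), the expected number of copies of forbidden subcomplexes tends to $0$, so by Markov's inequality $Y$ contains none of them a.a.s., hence $Y$ is asphericable a.a.s.

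The main obstacle is the middle step: proving that the regular-quotient construction actually forces the density ratio $f/v \ge 47/46$ for every configuration that can arise, i.e. showing no member of $\mathcal{F}$ is "too sparse." This requires (i) being careful that removing face-disjoint tetrahedra from $Y$ does not destroy the only spherical picture — this is where the definition of asphericable, allowing the deletion of tetrahedra, is used, and where one must rule out that the minimal spherical surface degenerates onto a tetrahedron boundary; and (ii) a tight analysis of the local structures from Theorem \ref{comb} to verify that the worst case (the sparsest forced subcomplex) has exactly the ratio $46/47$ and not something smaller, which would weaken the exponent. The probabilistic and homotopy-theoretic parts are comparatively soft once these combinatorial facts are in hand.
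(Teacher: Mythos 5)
Your proposal follows essentially the same route as the paper: Theorem \ref{thm1} is deduced by combining the finite forbidden list of Theorem \ref{thmlist} (whose construction via Lemma \ref{ronan}, Lemma \ref{lm1} and Theorem \ref{comb}, and whose density bound, you correctly identify as the combinatorial heart) with the first-moment containment threshold of Theorem \ref{embed}(A). The only small adjustment is that the guaranteed bound is $\tilde\mu(S)=\min_{S'\subset S}\mu(S')\le 46/47$ rather than $\mu(S)=v/f\le 46/47$ for the forbidden complex itself, so your expectation computation $O(n^vp^f)\to 0$ should be applied to the densest subcomplex of each forbidden complex --- which is precisely what the cited containment theorem does.
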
     

Theorem \ref{thm1} implies that under the assumptions (\ref{one}) the fundamental group $\pi_1(Y)$ of a random 2-complex has cohomological dimension $\le 2$ and $\pi_2(Y)$ is free as 
$\Z[\pi_1(Y)]$-module, a.a.s.

A random 2-complex $Y$ is not asphericable for $p\gg n^{-5/6}$, a.a.s. 
Indeed, in this range a random 2-complex contains, as a simplicial subcomplex, a triangulation $K$ of the sphere 
$S^2$ with 5 vertices and 6 faces (as follows from Theorem \ref{embed}). This subcomplex $K\subset Y$ does not contain a tetrahedron and is not aspherical, and hence $Y$ is not asphericable.

\begin{theorem}\label{thm2}
(A) If for some constants $3< c$ and $0< \epsilon < 1/47$
the probability parameter $p$ satisfies
\begin{eqnarray}\label{two}
\frac{c}{n} < p < n^{-1+\epsilon} 
\end{eqnarray}
then for the second Betti number of the fundamental group\footnote{The Betti numbers of a discrete group $G$ are defined as the Betti numbers of an aspherical CW complex with fundamental group $G$, see \cite{Hat02}, \cite{Br}.}
$\pi_1(Y)$, where $Y \in Y(n,p)$, one has 
\begin{eqnarray}\label{secondbetti}
n^2\cdot \frac{c-3}{8}\,\le \, b_2(\pi_1(Y)) \, \le\,  n^{2+\epsilon},
\end{eqnarray}
a.a.s.

(B) In range (\ref{two})
the cohomological dimension of the fundamental group $\pi_1(Y)$ of a random 2-complex $Y\in Y(n, p)$ equals $2$, a.a.s. 
\end{theorem}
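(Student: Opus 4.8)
The plan is to establish Theorem \ref{thm2} in two parts, using the asphericability provided by Theorem \ref{thm1} together with a lower bound coming from the abundance of tetrahedra at the low end of the probability range.

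For part (A), observe that in range (\ref{two}) we have $p \ll n^{-46/47}$, so by Theorem \ref{thm1} the random complex $Y$ is asphericable, a.a.s. Write $Y \simeq Z \vee S^2 \vee \cdots \vee S^2$ ($k$ spheres) as in (\ref{wedge}), where $Z$ is aspherical and $2$-dimensional, and $\pi_1(Y) = \pi_1(Z)$. Since $Z$ is a $2$-dimensional aspherical complex, $b_2(\pi_1(Y)) = b_2(Z)$. The upper bound is the easy direction: $Z$ is a subcomplex of $\Delta_n^{(2)}$, so its number of $2$-faces is at most $\binom{n}{3}$, and since $Y$ contains at most roughly $p\binom{n}{3} \le n^{2+\epsilon}$ faces a.a.s. (by a standard Chernoff/Markov estimate), we get $b_2(Z) \le f_2(Z) \le f_2(Y) \le n^{2+\epsilon}$ a.a.s. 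The lower bound requires counting: we must produce many face-disjoint tetrahedra in $Y$, since each contributes a $2$-sphere wedge summand and hence a unit of $b_2(Y)$ — but we need these to contribute to $b_2(\pi_1(Y)) = b_2(Z)$, not to the sphere summands. So the right object to bound below is $b_2(Z)$ directly. The standard trick: first count tetrahedra in $Y$. In $Y(n, c/n)$ with $c > 3$, the expected number of tetrahedra is $\sim \binom{n}{4} p^4 \sim (c^4/24) n^4 \cdot n^{-4} = c^4/24$, which is only a constant — too few. This tells me the lower bound $n^2 (c-3)/8$ cannot come merely from tetrahedra; rather it must come from the genuine $2$-dimensional homology of $Z$ itself. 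I would instead argue via Euler characteristic: a.a.s. $f_0(Y) = n$, $f_1(Y) = \binom{n}{2}$, and $f_2(Y) \ge p\binom{n}{3}(1 - o(1)) \ge \frac{c}{n}\cdot\frac{n^3}{6}(1-o(1)) \ge \frac{c}{6}\binom{n}{2}(1-o(1))$ roughly, so $\chi(Y) = n - \binom{n}{2} + f_2(Y)$. Since $Y \simeq Z \vee (\vee_k S^2)$, $\chi(Y) = \chi(Z) + k$; and $\chi(Z) = 1 - b_1(Z) + b_2(Z) = 1 - b_1(\pi_1(Y)) + b_2(\pi_1(Y))$. The point is that $b_1(\pi_1(Y)) = b_1(Y)$ is small — in range (\ref{two}), $Y$ has $H_1$ of bounded-ish rank (indeed $b_1(Y) \le f_1 - f_0 + 1 = \binom{n}{2} - n + 1$ trivially, but one needs the sharper fact that $\pi_1$ is \lq\lq large\rq\rq\ only in rank-$2$ homology). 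Actually the clean approach: $b_2(\pi_1(Y)) = b_2(Z) \ge \chi(Z) - 1 = \chi(Y) - k - 1$. One then needs $k$, the number of face-disjoint asphericalizing tetrahedra, to be small relative to $f_2(Y)$ — but $k$ is at most the total number of tetrahedra, which is $O(1)$ a.a.s. in the $c/n$ range (and $O(n^{4} p^{4}) = O(n^{3\epsilon})$ at the top), negligible compared to $n^2$. Hence $b_2(\pi_1(Y)) \ge f_2(Y) - \binom{n}{2} + n - 1 - k - 1 \ge \frac{c}{6}\binom n 2 (1-o(1)) - \binom n 2 - o(n^2) \ge \frac{(c-3)}{8} n^2$ for $n$ large, using $\binom n 2 \sim n^2/2$ and absorbing lower-order terms; the factor $8$ versus the naive $12$ leaves room for the $(1-o(1))$ slack. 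This is exactly the shape of (\ref{secondbetti}).

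For part (B), I would combine (A) with the general remark in the excerpt. Theorem \ref{thm1} already gives $\cd \pi_1(Y) \le 2$ a.a.s. in range (\ref{two}), since (\ref{two}) implies (\ref{one}). To rule out $\cd \pi_1(Y) \le 1$, i.e. to show $\cd \pi_1(Y) = 2$, it suffices to show $\pi_1(Y)$ is not free — equivalently (by Stallings--Swan, $\cd G \le 1 \iff G$ free) that $H^2(\pi_1(Y); M) \ne 0$ for some coefficient module $M$, or more simply that $b_2(\pi_1(Y)) > 0$. But (A) gives $b_2(\pi_1(Y)) \ge \frac{c-3}{8} n^2 > 0$ for $c > 3$ and $n$ large, a.a.s. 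Hence $\cd \pi_1(Y) \ge 2$, and combined with the upper bound, $\cd \pi_1(Y) = 2$ a.a.s.

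The main obstacle is the lower bound in part (A): one must show that essentially \emph{all} of the $2$-faces of $Y$ survive into the aspherical part $Z$, i.e. that the wedge of $2$-spheres split off in (\ref{wedge}) accounts for only $o(n^2)$ of the $2$-homology. This reduces to controlling $k$, the number of face-disjoint tetrahedra used to asphericalize $Y$ — I would bound $k$ by the total count of tetrahedral subcomplexes of $Y$, which is $O(1)$ a.a.s. when $p \sim c/n$ and $O(n^{3\epsilon}) = o(n^2)$ when $p \le n^{-1+\epsilon}$, via the first-moment method — together with a concentration estimate for $f_2(Y)$ around its mean $p\binom n 3$ to convert the Euler-characteristic identity into the stated numerical inequality. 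The remaining subtlety is checking that $b_1(\pi_1(Y))$ contributes only negligibly, which follows since $b_1(\pi_1(Y)) = b_1(Z) = b_1(Y) = \beta_1$ and one has $\beta_1 = f_1 - f_0 + \beta_0 - (\text{rank of }\partial_2)$, so $\chi(Z) - 1 = b_2(Z) - b_1(Z)$ and rearranging gives $b_2(Z) = \chi(Z) - 1 + b_1(Z) \ge \chi(Z) - 1 = \chi(Y) - k - 1$, so in fact $b_1$ only helps and need not be estimated at all.
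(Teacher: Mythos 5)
Your proposal is correct and follows essentially the same route as the paper: the lower bound $b_2(Z)\ge \chi(Y)-1-k = f_2-\binom{n-1}{2}-k$ that you derive via Euler characteristics is exactly the paper's inequality $b_2(Y)\ge f_2-\binom{n-1}{2}$ combined with $b_2(Z)=b_2(Y)-k$, and the control of $f_2$ by concentration and of $k$ by the first-moment/Markov bound, as well as the deduction of (B) from $b_2(\pi_1(Y))>0$ plus Theorem \ref{thm1}, all match the paper's argument. (The only blemish is the harmless slip $n^4p^4=O(n^{3\epsilon})$, which should read $O(n^{4\epsilon})$ and is still $o(n^2)$.)
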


See \S \ref{prfthm2} for the proof. 

There exists a triangulation $K$ of the real projective plane ${\bf {RP}}^2$ having 6 vertices and 10 faces and for $p\gg n^{-3/5}$ the complex $K$ embeds into a random 2-complex $Y$, a.a.s. We expect that the induced homomorphism $\pi_1(K) \to \pi_1(Y)$ is injective a.a.s., which would imply that $\pi_1(Y)$ has elements of order two and hence its cohomological dimension is infinite. 

\section{A combinatorial reduction of Theorem \ref{thm1}}

In this section we explain how the main Theorem \ref{thm1} follows from a deterministic combinatorial result 
which is stated in this section as Theorem \ref{thmlist} and plays a key role in the proof of Theorem \ref{thm1}.
We first recall some notations and results from \cite{CCFK}; they are similar to the classical results on the containment problem for random graphs \cite{JLR}.

\begin{definition}\label{def6} {\rm For a simplicial 2-complex $S$ let 
$\mu(S)$
 denote 
$\mu(S)  = 
v/f 
\in \, \Q,$
where $v=v_S$ and $f=f_S$ are the numbers of vertices and faces in $S$. 
Define also
\begin{eqnarray}
\tilde \mu(S) = \min_{S'\subset S} \mu(S'),
\end{eqnarray}
where the minimum is formed over all subcomplexes $S'\subset S$ or, equivalently, over all pure\footnote{Recall that a 2-dimensional simplicial complex is called {\it pure} if it is the union of 2-dimensional simplexes.}
subcomplexes $S' \subset S$. }
\end{definition}

The following Theorem explains importance of the invariant $\tilde \mu(S)$ for the containment problem of stochastic topology.

\begin{theorem}[{\rm \cite{BHK}, \cite{CCFK}}]\label{embed} Let $S$ be a finite simplicial 2-dimensional complex.  
\begin{enumerate}
  \item[(A)] If $p\ll n^{-\tilde \mu (S)}$ then the probability that $S$ admits a  simplicial embedding into a random 2-complex $Y\subset Y(n, p)$ tends to zero as $n\to \infty$.
  \item[(B)] If $p\gg n^{-\tilde \mu (S)}$ then the probability that $S$ admits a simplicial embedding into a random 2-complex $Y\subset Y(n, p)$ tends to one as $n\to \infty$. 
\end{enumerate}
\end{theorem}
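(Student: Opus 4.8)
The plan is to treat this as an instance of the classical second-moment method for the containment problem, adapting the well-known threshold arguments for subgraphs of random graphs (see \cite{JLR}) to the two-dimensional simplicial setting, exactly as carried out in \cite{BHK} and \cite{CCFK}. Let $S$ be the fixed finite 2-complex with $v$ vertices and $f$ faces, and let $X = X(Y)$ denote the number of simplicial embeddings of $S$ into a random complex $Y \in Y(n,p)$ — more precisely, the number of subcomplexes of $Y$ isomorphic to $S$, counted with the appropriate automorphism bookkeeping. Since $Y$ always contains the full $1$-skeleton $\Delta_n^{(1)}$, an embedding of $S$ is determined by an injection of the vertex set of $S$ into $\{1,\dots,n\}$ together with the condition that all $f$ faces of $S$ land in $Y$; hence the expected number of such embeddings is of order $n^{v} p^{f}$ (up to a constant depending only on $S$).

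For part (A), I would use the first-moment (Markov) bound. Write $\E[X] = \Theta(n^{v} p^{f})$. The assumption $p \ll n^{-\tilde\mu(S)}$ does not by itself force $n^v p^f \to 0$, so the key point is to pass to the subcomplex $S' \subset S$ achieving the minimum $\tilde\mu(S') = v_{S'}/f_{S'} = \tilde\mu(S)$: any embedding of $S$ restricts to an embedding of $S'$, so $X(Y) \le X'(Y)$ where $X'$ counts copies of $S'$, and $\E[X'] = \Theta(n^{v_{S'}} p^{f_{S'}})$. Now $p \ll n^{-v_{S'}/f_{S'}}$ gives $n^{v_{S'}} p^{f_{S'}} = (n \cdot p^{f_{S'}/v_{S'}})^{v_{S'}} \to 0$, so $\E[X'] \to 0$ and Markov's inequality yields $\P(X \ge 1) \le \P(X' \ge 1) \le \E[X'] \to 0$.

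For part (B), I would use the second-moment method: by Chebyshev's inequality it suffices to show $\E[X^2]/\E[X]^2 \to 1$, equivalently $\mathrm{Var}(X) = o(\E[X]^2)$. Expanding $\E[X^2]$ as a sum over ordered pairs of potential copies $(A,B)$ of $S$, one groups terms according to the isomorphism type of the overlap $A \cap B$, which is some subcomplex $W \subset S$ (ranging over the finitely many such $W$). The contribution of a given overlap type is of order $n^{2v - v_W} p^{2f - f_W}$, and the ratio of this to $\E[X]^2 = \Theta(n^{2v}p^{2f})$ is of order $n^{-v_W} p^{-f_W} = (n^{-v_W/f_W})^{f_W} \cdot p^{-f_W}$, which tends to $0$ precisely when $p \gg n^{-v_W/f_W}$. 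The empty and trivial overlaps are handled separately (the empty overlap contributes the dominant $\E[X]^2$ term). The crux is that every nonempty pure overlap $W$ satisfies $\mu(W) \ge \tilde\mu(S)$ by definition of $\tilde\mu$, and the hypothesis $p \gg n^{-\tilde\mu(S)} \ge n^{-\mu(W)}$ therefore kills every off-diagonal term; one must also check that $\E[X] \to \infty$ under the hypothesis, which again follows from $p \gg n^{-\tilde\mu(S)} \ge n^{-\mu(S)} \ge n^{-v/f}$.

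The main obstacle — and the only genuinely non-routine part — is the bookkeeping in the second moment: correctly enumerating the possible overlap complexes $W$, checking that each pure nonempty $W$ really does contribute a factor $\mu(W) \ge \tilde\mu(S)$ in the exponent (this is where the \emph{min over all subcomplexes} in the definition of $\tilde\mu$ is used, not just $\mu(S)$ itself), and verifying that the finitely many overlap types can be summed with a constant depending only on $S$. Since $S$ is fixed and finite this is a finite check, and it is precisely the argument given in \cite{CCFK}; I would simply invoke or reproduce that computation.
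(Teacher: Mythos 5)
Your proposal is correct and follows exactly the standard first-moment/second-moment containment argument that the cited sources \cite{BHK} and \cite{CCFK} use; the paper itself gives no proof of Theorem \ref{embed}, simply quoting it from those references. The one point to keep an eye on in a full write-up is the second-moment bookkeeping you already flag: the covariance terms are indexed by the pure subcomplex spanned by the shared faces, and it is precisely there that the minimum over all subcomplexes in the definition of $\tilde\mu$ enters.
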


\begin{example} {\rm Consider a simplicial graph $\Gamma$ and the cone over it $S=C(\Gamma)$. 
One has $v_S=v_\Gamma+1$ and $f_S=e_\Gamma$. Therefore 
$\mu(S) = \frac{v_\Gamma+1}{e_\Gamma}.$
Thus, $\mu(S)\le 1$ if $\chi(\Gamma)< 0$. }
\end{example}

\begin{example} \label{ex3points} {\rm 
As another example consider a simplicial 2-complex $S$ homeomorphic to the 2-disc. It is easy to see that $f=v+v_i-2$ and hence
\begin{eqnarray}\label{disc}\mu(S) = \frac{v}{v+v_i-2},\end{eqnarray}
where $v_i$ denotes the number of internal vertices of $S$. This formula is important for the method of the paper; in particular, it shows that for disc triangulations one has $\mu(S) <1$ assuming that 
$v_i\ge 3$. In the proof of Theorem \ref{thm1} we will show the existence of  a sub-disc in any triangulated sphere having at most 46 vertices, among them 3 are internal; for such a subdisc one has $\mu\le 46/47$ according to formula (\ref{disc}).  }
\end{example}

\begin{example}{\rm 

\begin{figure}[h]
\centering
\includegraphics[width=0.8\textwidth]{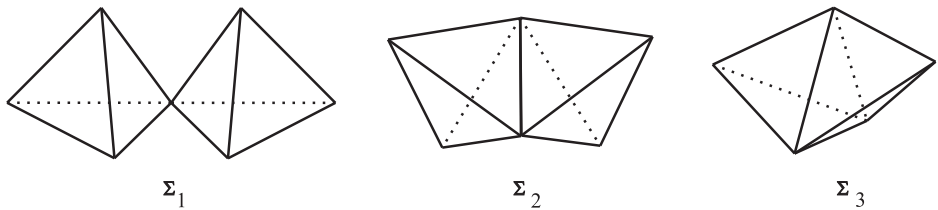}
\caption{Complexes $\Sigma_1$, $\Sigma_2$, $\Sigma_3$.}\label{lone1}
\end{figure}
Figure \ref{lone1} shows complexes $\Sigma_i$ which are unions of two tetrahedra having $i$ vertices in common, where $i=1,2,3$. 
One has $\mu(\Sigma_1) = 7/8$, $\mu(\Sigma_2)= 3/4$ and $\mu(\Sigma_3) = 5/7$. Since $\mu(\Sigma_i) \le 46/47$, using Theorem \ref{embed} one sees that for 
$p\ll n^{-46/47}$ a random 2-complex contains no subcomplexes isomorphic to $\Sigma_i$. 
Thus, according to Theorem \ref{thm1} and Theorem \ref{embed}, the picture of a random 2-complex with $p\ll n^{-46/47}$ is as follows: it contains a family of pairwise disjoint (and not only face disjoint) tetrahedra such that removing a face from each of these tetrahedra gives an aspherical 2-complex satisfying the Whitehead conjecture. 
}\end{example}

Next we mention the following useful formula which will be used later in this paper
\begin{eqnarray}\label{muel}
\mu(S) = \frac{1}{2} + \frac{2\chi(S)+ L(S)}{2f},
\end{eqnarray}
where $S$ is a finite simplicial 2-complex, $f=f_S$ is the number of faces in $S$ and 
\begin{eqnarray}\label{mu1}
L(S) = \sum_{e}\left(2-\deg(e)\right);
\end{eqnarray} 
here $e$ runs over all edges of $S$ and $\deg(e)$ denotes the number of faces incident to $e$. Formula (\ref{muel}) follows directly from definitions using the equation $\sum_e \deg(e) = 3f$.


\begin{definition}\label{reg} {\rm Let $M$ be a simplicially triangulated closed surface. Let $f: M\to M'$ be a simplicial surjective map. We will say that $M'$ is a} {\it regular simplicial quotient} {\rm of $M$ if (1) for any simplex $\sigma\in M$ the image $f(\sigma)$ has the same dimension, i.e. $\dim f(\sigma) = \dim \sigma$, and (2) if for two distinct 
2-dimensional simplexes $\sigma_1$ and $\sigma_2$ of $M$ one has $f(\sigma_1)=f(\sigma_2)$ then $\dim (\sigma_1\cap \sigma_2) <1$.  }
\end{definition}

We will use the following Lemma which was explicitly stated by Mark A. Ronan:

\begin{lemma}  [\cite{Ron}, Lemma 2.1]\label{ronan}
Any non-aspherical 2-dimensional complex $Y$ contains as a subcomplex a regu\-lar simplicial quo\-tient of a simplicial 2-sphere. 
\end{lemma}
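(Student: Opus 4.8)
The plan is to start from the topological fact that a non-aspherical 2-complex $Y$ carries a non-trivial element of $\pi_2(Y)$, represented by a map $g\colon S^2\to Y$, and to convert this map into a combinatorial object. First I would invoke simplicial approximation: after choosing a sufficiently fine subdivision $M_0$ of the 2-sphere (we may take $M_0$ to be any simplicial triangulation of $S^2$), the map $g$ is homotopic to a simplicial map $f\colon M\to Y$, where $M$ is a further iterated barycentric subdivision of $M_0$. Since $g$ is not null-homotopic, neither is $f$; in particular $f$ is not constant and its image is a genuinely 2-dimensional subcomplex of $Y$. This gives us a simplicial map from a triangulated 2-sphere into $Y$ whose image is the subcomplex we will massage into a regular simplicial quotient.

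Next I would arrange condition (1) of Definition \ref{reg}, namely that $f$ preserves dimension of every simplex. A simplicial map can collapse a simplex only along an edge whose two endpoints have the same image; such an edge lies in the full subcomplex of $M$ on vertices with a common $f$-image. The standard device is to pass to a subdivision: edge-collapses of $f$ can be eliminated by subdividing $M$ suitably so that no edge of the new triangulation has both endpoints mapping to the same vertex of $Y$ — equivalently, by performing enough barycentric subdivisions the "bad" edges become longer and the identifications are forced to happen on the level of vertices only, not along edges, of each individual simplex. The cleanest formulation is: replace $M$ by a subdivision on which $f$ restricted to the closed star of every vertex is injective enough that each 2-simplex maps onto a genuine 2-simplex. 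After this step $f\colon M\to Y$ is a simplicial surjection onto $f(M)\subset Y$ that preserves dimension.

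Finally I would secure condition (2): if two distinct faces $\sigma_1,\sigma_2$ of $M$ satisfy $f(\sigma_1)=f(\sigma_2)$, then $\dim(\sigma_1\cap\sigma_2)<1$, i.e. they share at most a vertex, never an edge. Suppose $\sigma_1$ and $\sigma_2$ share an edge $e$ and have the same image face $\tau=f(\sigma_1)=f(\sigma_2)$ in $Y$. Because $f|_{\sigma_i}$ is a bijection onto $\tau$ (by condition (1)) and the two restrictions agree on $e$, they agree on all of $\sigma_1\cup\sigma_2$ once one checks that the vertex of $\sigma_1$ opposite $e$ and the vertex of $\sigma_2$ opposite $e$ both map to the unique remaining vertex of $\tau$; hence $\sigma_1$ and $\sigma_2$ fold together. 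One can then excise this redundancy: either $\sigma_1=\sigma_2$ already, contradicting distinctness, or we may fold $M$ along $e$, replacing $M$ by the quotient identifying $\sigma_1$ with $\sigma_2$. This folding operation strictly decreases the number of 2-simplexes of $M$ and does not change the image $f(M)\subset Y$, and one must verify that the quotient is again a triangulated 2-sphere — this is where the argument needs care, since naive folding of a surface along an interior edge can produce a non-manifold or change the topological type. The correct statement (this is exactly Ronan's Lemma 2.1) is that after finitely many such folds, applied so as to preserve the surface, one arrives at a triangulated 2-sphere $M'$ — not necessarily the original $M$ — mapping to $Y$ by a map satisfying both (1) and (2), i.e. a regular simplicial quotient of a simplicial 2-sphere sitting inside $Y$. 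The main obstacle is precisely controlling the topology under the folding/identification step: one has to choose the folds so that at each stage the underlying space stays a sphere (rather than, say, splitting off a smaller sphere summand or creating singularities), and one has to ensure the process terminates, which follows from the monotone decrease of $f_2$.
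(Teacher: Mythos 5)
First, note that the paper does not prove this lemma at all: it is quoted verbatim from Ronan \cite{Ron}, so there is no internal proof to compare yours against; your sketch has to stand on its own, and as written it does not. The fatal step is your treatment of condition (1) of Definition \ref{reg}. You claim that the degeneracies of the simplicial approximation $f$ (edges of $M$ whose two endpoints have the same image) can be removed ``by performing enough barycentric subdivisions''. This is backwards: subdivision can never convert a degenerate simplicial map into a dimension-preserving one. If $f$ is simplicial and $f(a)=f(b)$ for an edge $[a,b]$, then $f$ is constant on that edge, and no subdivision of the domain changes the fact that a $1$-dimensional set has a $0$-dimensional image; moreover, finer subdivisions make adjacent vertices of the source land in smaller stars and hence make it \emph{more} likely, not less, that they share an image vertex (a constant map $S^2\to Y$, for instance, admits no non-degenerate simplicial approximation on any subdivision). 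Removing degeneracies requires modifying the domain sphere itself --- e.g.\ analysing the subcomplex of degenerately mapped simplexes and surgering it away, or arguing by minimality over all essential simplicial spherical maps into $Y$ --- and this is a genuine piece of Ronan's argument that your sketch replaces with a mechanism that cannot work.

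Second, for condition (2) you correctly identify the reduction move (cancelling a folded pair of $2$-simplexes along a common edge) and correctly flag that the quotient need not remain a sphere, but you then resolve the difficulty by asserting that ``this is exactly Ronan's Lemma 2.1,'' which is circular. The two missing ingredients are: (i) the surgery along a folded pair may pinch the sphere into a wedge of two spheres, so one must pass to a component and keep track of which component still carries a non-trivial class; and (ii) one must rule out the process terminating in the empty (or wholly degenerate) diagram, which is exactly where the hypothesis that $g$ is essential must be used --- each reduction is realised by a homotopy, so a non-nullhomotopic spherical map cannot cancel completely. Your sketch never invokes non-triviality after the first paragraph, so as written the reduction could a priori consume the entire sphere and produce nothing. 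The overall strategy (simplicial approximation followed by reduction of folds) is the standard and correct one, but both of the hard steps are either wrong or deferred to the very statement being proved.
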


The method of the proof of Theorem \ref{thm1} is to study in detail the local structure of the regular quotients of spheres; in particular we establish the following 
(deterministic) result:

\begin{theorem}\label{thmlist}\label{cor1} There exists a finite list $\LL$ of compact 2-dimensional simplicial complexes with the following two properties: 

(1) a finite simplicial 2-complex $Y$ is asphericable if it contains no subcomplexes isomorphic to a complex from the list $\LL$.

(2) for any $S\in\LL$ one has 
\begin{eqnarray}\label{46/47}\tilde \mu(S)\le \frac{46}{47};\end{eqnarray}
\end{theorem}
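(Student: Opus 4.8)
The plan is to deduce Theorem \ref{thmlist} from Lemma \ref{ronan} together with a close study of the local structure of regular simplicial quotients of simplicial 2-spheres; the two combinatorial inputs are that a fixed proportion of the vertices of any triangulated closed surface have bounded degree (Lemma \ref{lm1}) and that every simplicial 2-sphere contains one of finitely many local configurations (Theorem \ref{comb}). First I would isolate the following assertion $(\ast)$: there is a finite family $\LL_0$ of compact 2-complexes, each with $\mu(S)\le 46/47$, such that every regular simplicial quotient of a simplicial 2-sphere either contains a tetrahedron or contains a subcomplex isomorphic to a member of $\LL_0$. Granting $(\ast)$, put $\LL=\LL_0\cup\{\Sigma_1,\Sigma_2,\Sigma_3\}$, with $\Sigma_1,\Sigma_2,\Sigma_3$ as in Figure \ref{lone1}. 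Property (2) is then immediate, since every $S\in\LL$ is a subcomplex of itself, so $\tilde\mu(S)\le\mu(S)\le 46/47$ (recall $\mu(\Sigma_1)=7/8$, $\mu(\Sigma_2)=3/4$, $\mu(\Sigma_3)=5/7$). For property (1), let $Y$ be a finite 2-complex with no subcomplex isomorphic to a member of $\LL$. Two distinct tetrahedra of $Y$ meet in at most three vertices, and if they meet in $i\ge 1$ of them their union is isomorphic to $\Sigma_i$; since $Y$ contains no $\Sigma_i$, any two distinct tetrahedra of $Y$ are vertex-disjoint, hence face-disjoint. Let $T_1,\dots,T_k$ be all the tetrahedra of $Y$. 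If a subcomplex $Z\subset Y$ with $T_i\not\subset Z$ for all $i$ failed to be aspherical, then by Lemma \ref{ronan} it would contain a subcomplex $M'$ that is a regular simplicial quotient of a simplicial 2-sphere; by $(\ast)$, $M'$ would contain a tetrahedron — necessarily one of the $T_j$, which would then lie in $Z$, a contradiction — or a subcomplex isomorphic to a member of $\LL_0\subset\LL$, again a contradiction. Hence every such $Z$ is aspherical and $Y$ is asphericable.

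It remains to prove $(\ast)$. Let $f\colon M\to M'$ be a regular simplicial quotient of a simplicial 2-sphere $M$. If $M$ is a tetrahedron, its four vertices are pairwise adjacent and $f$ cannot identify two vertices joined by an edge (that would force $\dim f(e)<\dim e$), so $f$ is injective on vertices, hence injective, and $M'$ is a tetrahedron. If $M$ is not a tetrahedron, I would invoke Lemma \ref{lm1} and Theorem \ref{comb} to locate inside $M$ a triangulated subdisc $D$ with exactly three internal vertices and at most $46$ vertices in all — say, the star of a vertex of small degree, enlarged so as to pull two further low-degree vertices into its interior — whence $\mu(D)\le 46/47$ by formula (\ref{disc}). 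There are only finitely many isomorphism types of such discs, and all of them are put into $\LL_0$. If $f|_D$ is injective then $f(D)\cong D$, and we are done.

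The main obstacle is the case in which $f|_D$ is not injective. The key point is that conditions (1) and (2) of Definition \ref{reg} make the admissible identifications inside $D$ very restricted — for instance $f$ identifies no two vertices lying in a common simplex, and it identifies two distinct faces only if they share no edge — so that, $D$ having a bounded number of simplices, the possible images $f(D)$ range over an explicit finite list of 2-complexes. For each entry of that list I would check, by computing $\mu$ of a suitably chosen subcomplex via formulas (\ref{muel}) and (\ref{mu1}) (which record how the Euler characteristic and the edge degrees react to the identifications), that $f(D)$ contains a tetrahedron or a subcomplex $S$ with $\mu(S)\le 46/47$; the finitely many such $S$ are then also adjoined to $\LL_0$. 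This finite verification — finite precisely because of the degree bounds of Lemma \ref{lm1} and the finiteness in Theorem \ref{comb} — is the step I expect to demand the most care, since the effect of an individual identification on $\mu$ is delicate and must be controlled uniformly over the whole list.
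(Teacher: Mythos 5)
Your overall architecture matches the paper's: exclude the two-tetrahedra complex to force face-disjointness of tetrahedra, invoke Ronan's Lemma \ref{ronan} to reduce asphericity to the absence of regular quotients of spheres, use Lemma \ref{lm1} and Theorem \ref{comb} to find a bounded local configuration $D=\st(x)\cup\st(y)\cup\st(z)$ with three internal vertices of degree $\le 17$, and then take the finite list to consist of the possible images $f(D)$ under the quotient map. (One small slip along the way: two tetrahedra sharing exactly three vertices but not the triangle they span have $8$ faces, so their union is not $\Sigma_3$; your claimed vertex-disjointness does not follow, but face-disjointness does follow from excluding $\Sigma_3$ alone, which is all Definition \ref{def1} requires, and indeed the paper's $\LL_1$ contains only that one complex.)

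The genuine gap is in your treatment of the non-injective case, which is where the mathematical content of property (2) lives. You reduce it to the assertion that every tetrahedron-free regular quotient $S=f(D)$ admits a subcomplex of $\mu\le 46/47$, and you propose to establish this by inspecting ``an explicit finite list'' entry by entry, conceding that the effect of identifications on $\mu$ ``is delicate and must be controlled uniformly.'' But you supply no reason why the check succeeds for every entry, and the list (all regular quotients of all admissible complexes on up to $47$ faces) is far too large for a literal enumeration to count as a proof. What is needed, and what the paper supplies, is a uniform argument: either $S$ contains a closed subcomplex, which (not being a tetrahedron) has $\mu\le 5/6$ by Lemma 34 of \cite{CCFK}; or $S$ collapses to a graph, so $\chi(S)\le 1$, and then formula (\ref{muel}) gives $\mu(S)\le\frac{f(S)+L(S)+2}{2f(S)}$, which yields $\mu(S)\le\frac{f(S)-1}{f(S)}\le\frac{46}{47}$ once one proves $L(S)\le f(S)-3$. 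That last inequality is the crux: it follows from showing that every face of $S$ meeting $\partial S$ has exactly one free edge and that $S$ contains at least three faces with no free edge, namely the pairwise-distinct images of the faces $\sigma_1,\sigma_2,\sigma_3$ guaranteed by the two configurations of Theorem \ref{comb}, whose distinctness and internality one verifies directly from the regularity conditions of Definition \ref{reg}. Without this step (or an equivalent uniform bound), your proof of property (2) for the quotient complexes is not complete.
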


The proof is given in section \S \ref{prfthmlist} below. The list $\LL$ will be defined as the union 
\begin{eqnarray}\label{union} \LL = \LL_1 \cup \LL_2
\end{eqnarray} with $\LL_1$ consisting of the single 2-complex
$\LL_1=\{\Sigma\},$ where  $\Sigma$ is the union of two tetrahedra having $3$ vertices in common. 
One has $\mu(\Sigma) = 5/7$, i.e. $\Sigma$ satisfies (\ref{46/47}). 

%

\begin{proof}[Proof of Theorem \ref{thm1}]
Theorem \ref{thm1} directly follows from Theorem \ref{cor1} and Theorem \ref{embed}.  
\end{proof}

A proof of Theorem \ref{thmlist} will be given later in section \ref{prfthmlist}. The following section contains an auxiliary combinatorial material used in the proof of Theorem \ref{thmlist}.

\section{Local structure of simplicial spheres}

Let $M$ be a closed (i.e. compact without boundary) simplicially triangulated surface. Denote by $v, e, f$ the numbers of vertices, edges and faces of $M$. 
One has, using the Euler-Poincar\'e formula, 
$$\sum_x \deg(x) = 2e = 3f= 6v-6\chi(M)$$ where $x$ runs over the vertices of $M$. 
Here $\deg(x)$ denotes the degree of a vertex $x$ in the 1-skeleton, i.e. the number of edges incident to $x$. 
Thus
\begin{eqnarray}\label{average1}\frac{\sum_x \deg(x)}{v} = 6 - \frac{6 \chi(M)}{v},\end{eqnarray}
which gives a formula for the average degree of a vertex. We see that if the topological type of the surface is fixed and the number of vertices $v\to \infty$ in the triangulation tends to infinity, then the average degree of a vertex tends to $6$. This statement is strengthened in the following Lemma giving lower bounds on the number of vertices of lower degree. 

\begin{lemma} \label{lm1} Let $M$ be a closed simplicially triangulated surface. Let $k\ge 6$ be an integer and let $l_k$ denote the number of vertices of $M$ having degree $\le k$. Then 
\begin{eqnarray}\label{ge1}
l_k \ge \frac{(k-5)v+6\chi(M)}{k-2}. 
\end{eqnarray}
In particular, if $M$ is a simplicial sphere, real projective space, torus or Klein bottle then $\chi(M) \ge 0$ and one has
\begin{eqnarray}\label{ge2}
l_k \ge \frac{k-5}{k-2}v.
\end{eqnarray}
\end{lemma}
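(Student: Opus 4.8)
The plan is to derive (\ref{ge1}) by a short double-counting argument from the Euler--Poincar\'e identity $\sum_x \deg(x) = 6v - 6\chi(M)$ recorded above. First I would split the vertex set of $M$ into the $l_k$ vertices of degree $\le k$ and the remaining $v - l_k$ vertices, each of degree $\ge k+1$ (since $k$ and the degrees are integers), and write
$$6v - 6\chi(M) \;=\; \sum_{\deg(x)\le k}\deg(x)\;+\;\sum_{\deg(x)> k}\deg(x).$$

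Next I would bound each of the two sums from below. Since $M$ is a \emph{simplicial} triangulation of a closed surface, the link of every vertex is a simplicial cycle and hence contains at least three edges, so $\deg(x)\ge 3$ for every vertex $x$; this gives $\sum_{\deg(x)\le k}\deg(x)\ge 3 l_k$. For the high-degree vertices one trivially has $\sum_{\deg(x)> k}\deg(x)\ge (k+1)(v-l_k)$. Substituting these estimates into the identity,
$$6v - 6\chi(M)\;\ge\;3 l_k + (k+1)(v - l_k)\;=\;(k+1)v - (k-2)l_k,$$
and since $k-2>0$ this rearranges to $(k-2)l_k\ge (k-5)v + 6\chi(M)$, which is precisely (\ref{ge1}). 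For the \lq\lq in particular\rq\rq{} part I would simply note that the sphere, the real projective plane, the torus and the Klein bottle all have $\chi(M)\ge 0$, so the summand $6\chi(M)$ in the numerator is non-negative and may be dropped, which yields (\ref{ge2}).

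I do not expect a genuine obstacle here; the argument is an elementary averaging estimate. The only points deserving a word of justification are the uniform lower bound $\deg(x)\ge 3$ (valid because the triangulation is simplicial, so no vertex link is a bigon or a single loop) and the passage from $\deg(x)>k$ to $\deg(x)\ge k+1$. The hypothesis $k\ge 6$ is not actually needed for the inequality itself, but it guarantees $k-5\ge 1$, so that in the cases $\chi(M)\ge 0$ the bound (\ref{ge2}) gives a positive fraction of $v$, which is what the later arguments require.
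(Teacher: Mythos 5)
Your argument is correct and is essentially identical to the paper's proof: the same split of the vertex set, the same lower bounds $3l_k$ and $(k+1)(v-l_k)$ on the two partial degree sums, and the same rearrangement of the Euler--Poincar\'e identity. Your explicit justification of $\deg(x)\ge 3$ via simplicial vertex links is a point the paper leaves implicit, but it is the same proof.
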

\begin{proof}
Denote by $V$ the set of vertices of $M$. Let ${\frak L}_k\subset V$ denote the set of vertices $x$ with $\deg(x) \le k$. One has
$$\sum_{x\in V} \deg(x) = \sum_{x\in {\frak L}_k} \deg(x) + \sum_{x\in V-{\frak L}_k}\deg(x) \ge 3l_k + (k+1)(v-l_k).$$
Combining this inequality with (\ref{average1}) gives $6v-6\chi(M) \ge 3l_k + (k+1)(v-l_k)$ which is equivalent to (\ref{ge1}). 
\end{proof}

As an example, we mention that for $k=8$ and $\chi(M)\ge 0$ one obtains an inequality 
$l_8\ge \frac{v}{2}, $ i.e. at least half of the vertices have degree $\le 8$. For $k=17$ one obtains $$l_{17} \ge \frac{4}{5}v$$ (again, assuming that $\chi(M)\ge 0$).


\begin{theorem}\label{comb} Let $M$ be a simplicial triangulation of the sphere $S^2$. Then there exist three vertexes 
$x, y, z$ of $M$ satisfying $\deg(x)\le 17, \, \deg(y) \le 17, \, \deg(z) \le 17,$
such that either (a) the points $x, y, z$ span a 2-simplex, or (b) for some vertex $w$ the triples 
$x, y, w$ and $y, z, w$ span 2-simplexes of $M$. 
\end{theorem}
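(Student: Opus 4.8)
The plan is to exploit Lemma~\ref{lm1} to produce a large set of low-degree vertices and then argue that among them some three must be arranged in one of the two configurations (a) or (b); the obstruction to overcome is controlling what happens when no low-degree vertex is adjacent to another low-degree vertex. Concretely, set $k=17$ and let ${\frak L}={\frak L}_{17}$ be the set of vertices of $M$ of degree $\le 17$. By Lemma~\ref{lm1} (inequality (\ref{ge2}) with $\chi(S^2)=2>0$, so in fact slightly better) we have $|{\frak L}|\ge \frac{12}{15}v=\frac{4}{5}v$. The first case to dispose of is when the full subcomplex of $M$ spanned by ${\frak L}$ contains an edge, say $\{x,y\}$ with $\deg(x),\deg(y)\le 17$: since $M$ is a triangulated surface, the edge $\{x,y\}$ lies in exactly two faces $\{x,y,z\}$ and $\{x,y,w\}$. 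If either $z$ or $w$ lies in ${\frak L}$ we are in case (a); otherwise $z,w\notin{\frak L}$, but then $\{x,y,z\}$ and $\{x,y,w\}$ already give case (b) with the roles $x,y$ playing the part of the two low-degree endpoints and $z=w$'s... — more cleanly: if $\{x,y\}$ is an edge with both endpoints of degree $\le 17$, pick the third vertex $z$ of one incident face; if $\deg(z)\le 17$ we get (a), and if not we must still find a third low-degree vertex, which motivates the real argument below. So I would instead argue as follows.

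Suppose for contradiction that no three vertices of ${\frak L}$ realize configuration (a) or (b). First observe that ${\frak L}$ must be an independent set in the $1$-skeleton of $M$: if $\{x,y\}$ were an edge with $x,y\in{\frak L}$, then running around the link of $x$ (a cycle of length $\deg(x)\le 17$) we find the two neighbors $z,w$ of $x$ adjacent to $y$ in $M$; if one of $z,w$ is in ${\frak L}$ we have (a), and if neither is, we still need a low-degree third vertex — here is where I use that the link of $x$ has $\le 17$ vertices, so among $\deg(x)\le 17$ neighbors of $x$, not all can be outside ${\frak L}$ in a useful pattern. The robust way: consider any $x\in{\frak L}$. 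Its link $\mathrm{Lk}(x)$ is a cycle $C_x$ with $\deg(x)\le 17$ vertices. If $C_x$ contains two vertices $y,z$ that are \emph{adjacent in $C_x$} (i.e.\ $x,y,z$ span a face) with $y,z\in{\frak L}$, we get (a). If $C_x$ contains two vertices $y,z$ at distance $2$ in $C_x$ — so there is $w$ with $x,y,w$ and $x,w,z$ faces, hence $y,x,w$ and $w,x,z$... not quite the pattern of (b), which asks for a common vertex $w$ with $x,y,w$ and $y,z,w$ faces. I would reformulate (b): it says $x,y,z$ together with $w$ form two faces sharing the edge $\{y,w\}$, i.e.\ $x$ and $z$ are the two "wings" over the edge $\{y,w\}$ — equivalently $x,z\in\mathrm{Lk}(y)$ and $w\in\mathrm{Lk}(y)$ with $x,w$ adjacent and $z,w$ adjacent in the cycle $\mathrm{Lk}(y)$. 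So configuration (b) occurs iff some vertex $y\in{\frak L}$ has two vertices $x,z\in{\frak L}$ among its link neighbors. Combining: if \emph{any} vertex $y\in{\frak L}$ has at least two link-neighbors in ${\frak L}$, we are done — either those two are adjacent in $C_y$ (case (a)) or not (case (b) via $y$ and the vertex $w$ between them... ).

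Hence the assumption forces: every $y\in{\frak L}$ has \emph{at most one} neighbor in ${\frak L}$. Thus the induced subgraph on ${\frak L}$ in the $1$-skeleton of $M$ has maximum degree $\le 1$, so it is a disjoint union of isolated vertices and single edges; in particular $|{\frak L}|$ vertices span $\le |{\frak L}|/2$ edges, and more importantly the $\ge \frac45 v$ vertices of ${\frak L}$ are "almost independent." Now I count edges and faces of $M$ to derive a contradiction: the $\le \frac15 v$ vertices of $V\setminus{\frak L}$ must carry almost all of the incidence structure. Every face of $M$ has $3$ vertices; a face all of whose vertices lie in ${\frak L}$ is impossible (it would give case (a)), and in fact a face with two vertices in ${\frak L}$ that are non-adjacent is impossible, while a face with two vertices in ${\frak L}$ forming an ${\frak L}$-edge is allowed but each such ${\frak L}$-edge lies in only $2$ faces. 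So the number of faces meeting ${\frak L}$ in $\ge 2$ vertices is $\le 2\cdot(\text{number of }{\frak L}\text{-edges})\le |{\frak L}|$. Every other face meets ${\frak L}$ in $\le 1$ vertex, hence has $\ge 2$ vertices in $V\setminus{\frak L}$. Counting pairs (face, vertex of that face in $V\setminus{\frak L}$): this is $\ge 2(f-|{\frak L}|)$ on one hand, and on the other hand it is $\sum_{x\notin{\frak L}}\deg_{\mathrm{face}}(x)=\sum_{x\notin{\frak L}}\deg(x)$, which for a triangulated surface is at most $\sum_{x\notin{\frak L}}\deg(x)$ — and here each such vertex could have large degree, so this direction alone is not enough; instead I use $\sum_{x\notin{\frak L}}\deg(x)=6v-12-\sum_{x\in{\frak L}}\deg(x)$ together with the fact that the ${\frak L}$-vertices, having almost no neighbors in ${\frak L}$, have essentially all their edges going to $V\setminus{\frak L}$, so $\sum_{x\in{\frak L}}\deg(x)$ equals (twice the ${\frak L}$-edge count) plus the number of ${\frak L}$-to-outside edges, the latter being $\le \sum_{x\notin{\frak L}}\deg(x)$. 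Using $f=2v-4$ and $|{\frak L}|\ge\frac45 v$, these inequalities combine to force $v$ bounded, contradicting that $M$ is an arbitrary triangulation; more carefully one gets a numerical contradiction already for all $v$, since the slack $\frac{k-5}{k-2}=\frac{12}{15}$ at $k=17$ was chosen to make exactly this counting work.

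\textbf{Main obstacle.} The delicate point is the bookkeeping in the last step: translating "every vertex of ${\frak L}$ has $\le 1$ neighbor in ${\frak L}$" plus "$|{\frak L}|\ge\frac{k-5}{k-2}v$" into an honest contradiction requires choosing $k$ correctly (the paper's choice $k=17$, giving $\frac45$) and carefully comparing $\sum_{x\in{\frak L}}\deg(x)$, the number of ${\frak L}$-to-$({V\setminus{\frak L}})$ edges, and $\sum_{x\notin{\frak L}}\deg(x)=6v-12-\sum_{x\in{\frak L}}\deg(x)$; a vertex outside ${\frak L}$ has no a priori degree bound, so the contradiction cannot come from degree bounds alone and must use the Euler relation $f=2v-4$ together with the face-counting above. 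I expect the clean way to package this is: each ${\frak L}$-edge is in $\le 2$ faces, each ${\frak L}$-vertex is in $\le \deg(x)\le 17$ faces, but the "independence up to a matching" means the $\ge \frac45 v$ vertices of ${\frak L}$ see $\le \frac25 v$ of the faces "doubly" and the remaining $\ge f-\frac25 v = 2v-4-\frac25 v$ faces have $\ge 2$ vertices outside, forcing $\sum_{x\notin{\frak L}}\deg(x)\ge 2(2v-4-\tfrac25 v)=\tfrac{16}{5}v-8$, while simultaneously $\sum_{x\notin{\frak L}}\deg(x)\le 6v-12-3|{\frak L}|\le 6v-12-\tfrac{12}{5}v=\tfrac{18}{5}v-12$ — and tightening the constants (using that each ${\frak L}$-vertex also has degree $\ge 3$ and that the matching on ${\frak L}$ is small) yields the contradiction. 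I would present this final inequality chase carefully, as it is the only non-routine part.
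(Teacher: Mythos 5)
There is a genuine gap at the pivotal reduction step. You claim that if some $y\in{\frak L}$ has two neighbours $x,z\in{\frak L}$ that are \emph{not} adjacent in the link cycle $C_y$, then configuration (b) occurs ``via $y$ and the vertex $w$ between them.'' But (b) requires a single vertex $w$ with both $\{x,y,w\}$ and $\{y,z,w\}$ faces, i.e.\ $x$ and $z$ must be at distance exactly $2$ in $C_y$; if they sit at distance $3$ or more in the link of $y$, no such $w$ exists. Hence the negation of the theorem does \emph{not} force the induced subgraph on ${\frak L}$ to have maximum degree $\le 1$: it only forces any two ${\frak L}$-neighbours of a vertex $y\in{\frak L}$ to be at link-distance $\ge 3$ in $C_y$, which (since $|C_y|=\deg(y)\le 17$) still permits up to $5$ such neighbours. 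Your entire final count rests on the matching-plus-isolated-vertices structure, so it collapses at this point. This is exactly the case the paper has to work hardest on: it partitions the induced graph $L$ on ${\frak L}$ into components $L_i$ by size, notes that components of size $1$ and $2$ already give face-to-vertex ratios $R_1\ge 3$ and $R_2\ge 5/2$ (so that $L=L_1\cup L_2$ alone contradicts $f=2v-4$ via $f\ge \frac52\cdot\frac45 v=2v$), and then uses the ``link-distance $\ge 3$'' property to show that components of size $i\ge 3$ still satisfy $R_i\ge 8/3\ge 5/2$ --- each valence-$\ge 2$ vertex of $L_i$ contributes at least two faces with only one vertex in $L_i$, precisely because its ${\frak L}$-neighbours are spread out in its link.

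A secondary problem: even granting your (false) max-degree-$\le 1$ claim, the inequality chase you sketch does not close. Your two bounds read $\sum_{x\notin{\frak L}}\deg(x)\ge \tfrac{16}{5}v-8$ and $\sum_{x\notin{\frak L}}\deg(x)\le \tfrac{18}{5}v-12$, which are perfectly compatible for all large $v$; you acknowledge this by deferring to an unspecified ``tightening of constants.'' (In fact, under max degree $\le 1$ a clean contradiction \emph{is} available --- the stars of distinct components of $L$ are pairwise face-disjoint and carry at least $3$ faces per isolated vertex and $5$ per isolated edge, giving $f\ge \tfrac52|{\frak L}|\ge 2v>2v-4$ --- but that is a different count from the one you set up, and in any case the reduction to max degree $\le 1$ is the step that fails.)
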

\begin{proof} We shall assume that $M$ has at least $6$ vertexes, since otherwise our statement is obvious. 

Let ${\frak L}_{17}$ denote the set of vertexes of $M$ of degree $\le 17$. The cardinality $l_{17}=|{\frak L}_{17}|$ of this set satisfies 
$l_{17}\ge 4/5\cdot v$, see above. Let $L$ denote the graph with vertex set $\frak L_{17}$ induced from the 1-skeleton of $M$. In general, $L$ has many connected components; for any $i=1, 2, \dots, $ we will denote by $L_i$ the union of connected components of $L$ having exactly $i$ vertexes. 
We will also denote 
$$S_i=\cup_{x\in V(L_i)}\st(x),$$ the union of 2-simplexes having at least one vertex in $L_i$. 

Note that $L_1$ is the union of isolated vertexes and $S_1$ in the union of stars of these vertexes;
moreover, for $x, y \in V(L_1)$ the stars $\st(x)$ and $\st(y)$ have no common faces. Since each of these stars has at least 3 faces, we obtain 
\begin{eqnarray}\label{fsone} f(S_1) \ge 3v(L_1).
\end{eqnarray}

The graph $L_2$ is the union of isolated edges and $S_2$ is the union of stars of these edges. If $e, e'\subset L_2$ are two distinct edges then the stars 
$\st(e)$ and $\st(e')$ are face disjoint, i.e.have no common faces. Clearly, each star $\st(e)$ contains $\ge 5$ faces. Therefore, we obtain that
\begin{eqnarray}\label{fstwo} f(S_2) \ge 5\cdot e(L_2) = 5/2 \cdot v(L_2).
\end{eqnarray}

Our first goal is to show that $L_i\not=\emptyset$ for some $i\ge 3$. If this is false that $L=L_1\cup L_2$ and $l_{17}=v(L_1)+v(L_2).$
Then using (\ref{fsone}) and (\ref{fstwo}) we obtain
\begin{eqnarray}\label{good}
f= f(M) \ge f(S_1) + f(S_2) \ge 5/2\cdot (v(L_1) +v(L_2)) = 5/2\cdot l_{17} \ge 5/2\cdot 4/5 \cdot v(M)= 2v.
\end{eqnarray}
This contradicts the Euler-Poincar\'e theorem for $S^2$ which states that $f=2v-4$. Thus, the graph $L_i$ is nonempty for some $i\ge 3$ and therefore we may find three distinct vertexes $x, y, z$ of $M$ having degree $\le 17$ such that $y$ is connected to $x$ and $z$ by an edge. 

For such a triple $x,y, z$, the link of $y$ in $M$ is homeomorphic to the circle, and we will denote by $d(x, z)$ the distance between $x$ and $z$ in this link. The statement of Theorem \ref{comb} is equivalent to the claim that we may find $x, y, z$ such that additionally to the properties mentioned above one has $d(x, z) \le 2$. 

Suppose the contrary: (A) for any triple of distinct vertexes $x, y, z$ of degree $\le 17$ such that $y$ is connected to both $x$ and $z$ by an edge, the distance $d(x, z)$ (see above) is $\ge 3$. Then $\deg(y)\ge 6$. Denote 
$$R_i= \frac{f(S_i)}{v(L_i)}, \quad i=1, 2, 3, \dots$$
Then $R_1\ge 3$ (by (\ref{fsone})) and $R_2\ge 5/2$ (by (\ref{fstwo})). Let us show that for $i\ge 3$ one has 
\begin{eqnarray}\label{83}
R_i\ge 8/3.
\end{eqnarray} 
Each face of $S_i$, where $i\ge 3$, has one or two vertexes in $L_i$; the possibility that all three vertexes lie in $L_i$ is excluded by our assumption (A). Let $f_1(S_i)$ and $f_2(S_i)$ denote the number of faces in $S_i$ having one or two vertexes in $L_i$, correspondingly. Clearly $f_2(S_i)=2e(L_i)$, since each edge of $L_i$ is incident to exactly two faces. Let $k_i$ denote the number of connected components of $L_i$. 
Then 
\begin{eqnarray}v(L_i)=i\cdot k_i \quad \text{and} \quad e(L_i) \ge v(L_i)-k_i.
\end{eqnarray}
Each vertex of valence 1 of $L_i$ contributes at least one to $f_1(S_i)$; each vertex of valence $\ge 2$ of $L_i$ contributes at least two into $f_1(L_i)$ (due to our assumption (A)). This shows that $f_1(S_i) \ge v(L_i) +k_i$. 
Combining these estimates together we get 
\begin{eqnarray*}
f(S_i) &=& f_1(S_i) + f_2(S_i) \ge v(L_i)+k_i + 2e(L_i)  \\ &\ge& v(L_i) +k_i + 2(v(L_i) -k_i) = 3v(L_i) \left(1-\frac{1}{3i}\right)\ge \frac{8}{3}v(L_i),
\end{eqnarray*}
since $i\ge 3$, proving (\ref{83}). 

Now we have 
\begin{eqnarray}\label{good1}
f=f(M) \ge \sum_{i=1}f(S_i) \ge \frac{5}{2} \sum_{i\ge 1} v(L_i) = \frac{5}{2}l_{17}\ge 2v,
\end{eqnarray}
which contradicts the equality $f=2v-4$. This shows that our assumption (A) is false and completes the proof of Theorem \ref{comb}. 
\end{proof}

Next we state a similar result for triangulations of the torus:

\begin{theorem}\label{combtorus} Let $M$ be a simplicial triangulation of the torus $T^2$. Then there exist three vertexes 
$x, y, z$ of $M$ satisfying $\deg(x)\le 18, \, \deg(y) \le 18, \, \deg(z) \le 18,$
such that either (a) the points $x, y, z$ span a 2-simplex, or (b) for some vertex $w$ the triples 
$x, y, w$ and $y, z, w$ span 2-simplexes of $M$. 
\end{theorem}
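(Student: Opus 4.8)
The plan is to run the proof of Theorem~\ref{comb} essentially verbatim, changing only the Euler characteristic and raising the degree threshold from $17$ to $18$. For a simplicially triangulated torus $M$ one has $\chi(M)=0$, so $2e=3f=6v$ and hence $f=2v$ (instead of $f=2v-4$ for $S^2$). Applying Lemma~\ref{lm1} with $k=18$ and $\chi(M)=0$ gives $l_{18}\ge \tfrac{13}{16}\,v$, i.e. at least $\tfrac{13}{16}$ of the vertices have degree $\le 18$. The reason one must pass from $17$ to $18$ is precisely that the torus lacks the term $-4$ in its face count: with $k=17$ one would only get $\tfrac52\cdot\tfrac{12}{15}v=2v$, which is not enough, whereas with $k=18$ one gets $\tfrac52\cdot\tfrac{13}{16}v=\tfrac{65}{32}v>2v$.

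Following Theorem~\ref{comb}, let $L$ be the subgraph of the $1$-skeleton of $M$ induced on the set ${\frak L}_{18}$ of vertices of degree $\le 18$, let $L_i$ be the union of those components of $L$ having exactly $i$ vertices, and set $S_i=\bigcup_{x\in V(L_i)}\st(x)$. The complexes $S_i$ are pairwise face-disjoint, since any face of $M$ with two vertices in ${\frak L}_{18}$ has both of them lying in a single component of $L$; and one still has $f(S_1)\ge 3v(L_1)$ and $f(S_2)\ge \tfrac52 v(L_2)$. The first step is to show $L_i\ne\emptyset$ for some $i\ge 3$: otherwise $L=L_1\cup L_2$ and
\[
2v \;=\; f(M)\;\ge\; f(S_1)+f(S_2)\;\ge\; \tfrac52\bigl(v(L_1)+v(L_2)\bigr)\;=\;\tfrac52\, l_{18}\;\ge\;\tfrac52\cdot\tfrac{13}{16}\,v\;=\;\tfrac{65}{32}\,v,
\]
which is absurd. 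Hence there exist distinct vertices $x,y,z$ of degree $\le 18$ with $y$ adjacent to both $x$ and $z$, and (exactly as in Theorem~\ref{comb}) it remains to arrange $d(x,z)\le 2$ in the link circle $\Lk(y)$, since $d(x,z)=1$ is case (a) and $d(x,z)=2$ is case (b).

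The last step is the same contradiction argument as in the sphere case. Suppose (A): every admissible triple $x,y,z$ has $d(x,z)\ge 3$ in $\Lk(y)$. Then $M$ has no $2$-simplex all of whose vertices lie in ${\frak L}_{18}$ (that would be case (a)), so for $i\ge 3$ every face of $S_i$ has at most two vertices in $L_i$, and the same bookkeeping as in Theorem~\ref{comb} --- with $f_2(S_i)=2e(L_i)$, $f_1(S_i)\ge v(L_i)+k_i$ (this is where assumption (A) is used), $e(L_i)\ge v(L_i)-k_i$, $v(L_i)=i\,k_i$, and $k_i$ the number of components of $L_i$ --- gives $f(S_i)\ge 3v(L_i)\bigl(1-\tfrac{1}{3i}\bigr)\ge\tfrac83 v(L_i)$. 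Together with $f(S_1)\ge 3v(L_1)$ and $f(S_2)\ge\tfrac52 v(L_2)$, summing over the face-disjoint $S_i$ yields $2v=f(M)\ge\tfrac52\sum_i v(L_i)=\tfrac52\, l_{18}\ge\tfrac{65}{32}\,v$, a contradiction. Hence (A) fails, proving the theorem.

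I expect the only step needing care to be the inequality $f_1(S_i)\ge v(L_i)+k_i$: it rests on the local fact that, under (A), a degree-$\le 18$ vertex $y$ with two neighbours $x,z$ in $L_i$ has them at link-distance $\ge 3$, so $\Lk(y)$ contributes at least two faces that meet $L_i$ only in $y$ --- this is purely local and carries over from $S^2$ to $T^2$ unchanged. The other point to watch is that the numerical margin is minimal ($65>64$), so the bound $18$ is sharp for this method; for the very small triangulations of $T^2$ the statement is immediate anyway, since then all vertices have degree far below $18$ and any face of $M$ already realizes case (a).
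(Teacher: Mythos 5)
Your proof is correct and is essentially identical to the paper's: the authors likewise just rerun the argument of Theorem~\ref{comb} with $k=18$, using Lemma~\ref{lm1} to get $l_{18}\ge\tfrac{13}{16}v$ and replacing $f=2v-4$ by $f=2v$, so that both contradictions become $\tfrac{5}{2}\cdot\tfrac{13}{16}v=\tfrac{65}{32}v>2v=f$. Your added remarks (why $17$ fails for the torus, the face-disjointness of the $S_i$, and the small-triangulation case) are accurate elaborations of details the paper leaves implicit.
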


\begin{proof} The proof repeats the arguments of Theorem \ref{comb}; we will indicate only the places where changes are needed. 
Firstly, using Lemma \ref{lm1} one obtains the inequality $l_{18} \ge 13/16\cdot v$. Then the string of inequalities (\ref{good}) becomes 
$$f\ge \frac{5}{2}\cdot \frac{13}{16} \cdot v >2v.$$
This contradicts the Euler-Poincar\'e theorem for the torus stating $f=2v$. 
Secondly, instead of (\ref{good1}) we will have
$$f \ge \frac{5}{2} l_{18} \ge \frac{5}{2}\cdot \frac{13}{16}\cdot v >2v,$$
again contradicting the equation $f=2v$. 
\end{proof}

Theorem \ref{combtorus} will not be used in this paper. Similarly to Theorem \ref{comb} it could be applied to study maps of tori into random 2-complexes 
or equivalently to finding pairs of commuting elements in their fundamental groups. 

\section{Proof of Theorem \ref{thmlist}} \label{prfthmlist}

\begin{proof} 
We define $\LL=\LL_1\cup \LL_2$ as the union (\ref{union}) with $\LL_1=\{\Sigma\}$, see above. Our task now is to describe $\LL_2$.

Consider the set $\LL'$ of isomorphism types of all possible simplicial 2-complexes $S$ with the following properties:

\begin{enumerate}
\item  $S$ has at most $47$ two-dimensional simplexes;

\item $S$ can be topologically embedded into the sphere $S^2$;

\item $S$ contains three internal vertices $x, y, z$, each of degree $\le 17$, such that the links of $x, y, z$ are circles and 
$$S=\st(x)\cup \st(y)\cup \st(z);$$

\item The vertexes $x, y, z$ satisfy the conditions of Theorem \ref{comb}, i.e. either the vertices $x, y, z$ span a 2-simplex, or for some vertex $w\in S$ the triples 
$x, y, w$ and $x, z, w$ span 2-simplexes of $S$. 
\end{enumerate}

Property 1 in the description of $\LL'$ obviously follows from Properties 3 and 4; it is stated here to emphasise the finiteness of $\LL'$.


%
%
%

Next we define the set $\LL''$ as the set of all regular quotients of complexes from $\LL'$. Finally, we define $\LL_2$ as the subset of those complexes
of $\LL''$ which contain no tetrahedron as a subcomplex. 

Let us show that the list $\LL$ satisfies condition (1) of Theorem \ref{thmlist}. 
Assume that $Y$ is a finite 2-complex containing no subcomplexes isomorphic to $S\in \LL$. 
Consider all tetrahedra $T_1, \dots, T_k$ contained in $Y$. 
They must be pairwise face disjoint since we know that $Y$ contains no subcomplexes isomorphic to $\Sigma\in \LL_1$. 

Let $Y'\subset Y$ be a subcomplex 
such that $T_j\not\subset Y'$ for $j=1, \dots, k$. We want to show that $Y'$ is aspherical. Assume the contrary, i.e. suppose that $\pi_2(Y') \not=0$.
By Lemma \ref{ronan} there exists a regular simplicial quotient $S'$ of a triangulation of $S^2$ which is embedded into $Y'$. 

By Theorem \ref{comb}, due to our construction of $\LL'$, and because of our assumption that $Y$ has no subcomplexes isomorphic to 
$S\in \LL_2$, the image 
$S'$ must contain a tetrahedron, a contradiction. 

To finish the proof of Theorem \ref{thmlist} we are now left to show that condition (2) is satistied for $S\in \LL_2$, i.e. one has $\mu(S) \le 46/47$. 

Assume that $S$ contains a closed 2-complex $S'$ as a subcomplex\footnote{Recall that a 2-complex is called closed if it has no free edge; an edge is free if it is incident to a single 2-simplex.}. Because of our construction we may assume that $S'$ is not a tetrahedron. 
Then, using Lemma 34 from \cite{CCFK} we obtain $\tilde \mu(S)\le \mu(S) \le 5/6$ which implies that $\tilde \mu(S) \le 46/47$.

Performing repeatedly all possible collapses through free edges of $S$ we may arrive either (a) to a closed 2-complex $S'$ or (b) to a graph. 
In the case (a) $S$ contains a closed subcomplex  and we already know that then $\tilde \mu(S) \le 46/47$. Assuming (b) we have 
 $\chi(S) \le 1$ and, using formula (\ref{muel}), we have 
\begin{eqnarray}\label{this}
\mu(S) = \frac{1}{2} + \frac{2\chi(S)+ L(S)}{2f(S)}\le \frac{f(S)+L(S) +2}{2f(S)}, 
\end{eqnarray}
where $L(S) = \sum_{e}\left(2-\deg_S(e)\right)$. We will show (see below) that for $S\in \LL_2$ one has
\begin{eqnarray}\label{below}
L(S) \le f(S)-3.
\end{eqnarray}
Thus, (\ref{this}) implies that 
$$\mu(S) = \frac{v(S)}{f(S)} \le \frac{f(S)-1/2}{f(S)} $$
implying (since $v(S)$ and $f(S)$ are integers) that $v(S) \le f(S)-1$ and hence 
$$\mu(S) \le \frac{f(S)-1}{f(S)}\le \frac{46}{47}$$
(using $f(S)\le 47$).

Finally we prove the inequality (\ref{below}).  
Denote by $\partial S$ the set of all free edges of $S$. From the definition of $L(S)$ it is clear that $L(S)\le |\partial S|$. 
We claim that for $S\in \LL_2$ one has
\begin{eqnarray}\label{below1}
|\partial S| \le f(S) -3,
\end{eqnarray}
which would imply (\ref{below}). From the construction of $\LL_2$ it is clear that each 2-simplex of $S\in \LL_2$ incident to the boundary $\partial S$ contains exactly one free edge. Hence (\ref{below1}) follows once we know that each $S\in \LL_2$ has at least three 2-simplexes which 
have no free edges.

Given a complex $S'\in \LL'$ and a regular simplicial quotient $f: S'\to S$ where $S\in \LL_2$,
consider Figure \ref{sigmas} where on the left we have three vertices $x, y, z\in S'$ satisfying condition (a) of Theorem \ref{comb} and on the right we have points $x, y, z, w\in S'$ as in condition (b) of Theorem \ref{comb}. 

It is easy to see that in both cases the images under $f$ of the simplexes $\sigma_1, \sigma_2, \sigma_3$ are internal (i.e. have no free edges\footnote{This is an implication of the fact that for an edge $e$ of $S'$ the degree of $e$ in $S'$ is less or equal than the degree of $f(e)$ in $S$, as follows from the regularity condition of Definition \ref{reg}.}) and are all pairwise distinct.
Indeed, $f(\sigma_1) \not=f(\sigma_2)$ and $f(\sigma_1)\not= f(\sigma_3)$ since otherwise the regularity condition of Definition \ref{reg} is violated. 
Besides, $f(\sigma_2)\not=f(\sigma_3)$ as otherwise, in case (b) (see Figure \ref{sigmas} right) one must have $f(x) \not= f(w)$ (to avoid degeneration of $\sigma_1$) and 
hence $f(x)=f(z)$ which produces a folding along the edge $yw$ contradicting Definition \ref{reg}. In the case (a) the arguments are similar. 
\begin{figure}[h]\label{sigmas}
\centering
\includegraphics[width=0.6\textwidth]{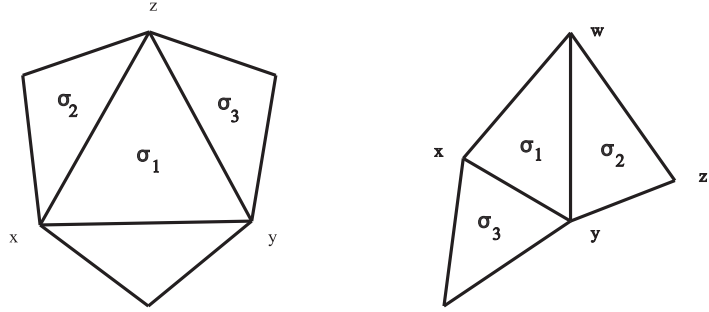}
\caption{Simplicial complexes in the list $\LL_2$.}
\end{figure}

This completes the proof. 

\end{proof}

\section{Proof of Theorem \ref{thm2}}\label{prfthm2}

Suppose that for $Y\in Y(n,p)$ we have the tetrahedra $T_1, \dots, T_k\subset Y$ as in Definition \ref{def1}. Let $Z$ be obtained from $Y$ by removing 
a face from each of the tetrahedra $T_i$. We want to show that $b_2(Z)>0$, a.a.s. Since $\pi_1(Z) = \pi_1(Y)$ and $Z$ is aspherical, it would imply that 
$\cd(\pi_1(Y))=2$. 

Consider two random variables $b_2, k: Y(n,p)\to \Z$, where for $Y\in Y(n,p)$ the symbol $k(Y)$ denotes the number of tetrahedra simplicially embedded in $Y$. Let $f_2: Y(n,p)\to \Z$ denote the number of 2-simplexes in a random complex; this is a binomially distributed random variable with expectation $p\binom n 3$. We have 
\begin{eqnarray}\label{ftwo}
f_2 - \binom {n-1}{2}\, \le\,  b_2\, \le\, f_2 . 
\end{eqnarray}
The inequality (2.6) on page 26 of \cite{JLR} with $t=n^{7/4}$ gives
$$\P(f_2 \le p\binom n 3 -t) \le \exp\left(-\frac{t^2}{2p\binom n 3}\right) \le \exp\left(-\sqrt{n}\right).$$
Thus we see that with probability at least $1- \exp\left(-\sqrt{n}\right)$, one has 
$$b_2\ge f_2 - \binom {n-1}{2}\ge p{\binom n 3}- {\binom {n-1} 2}-t \ge \binom {n-1} 2 \cdot \left(\frac{c-3}{3}\right) -t \ge n^2\cdot \frac{c-3}{7}
 $$
for large $n$. 

The expectation of $k$ satisfies $\E(k) \le p^4n^4< n^{4\epsilon}.$ The Markov inequality $\P(k\ge t) \le \frac{\E(k)}{t}$ with $t=n^{5\epsilon}$ gives
$\P(k\ge n^{5\epsilon}) \le n^{-\epsilon}.$ 
Thus, with probability tending to one as $n\to \infty$, one has
$$b_2(\pi_1(Y)) = b_2(Z) \ge b_2(Y) - k(Y) \ge n^2\cdot \frac{c-3}{7} - n^{5\epsilon} \ge n^2\cdot \frac{c-3}{8}>0$$
for $n\to \infty$. This proves the left inequality in (\ref{secondbetti}).

From inequality (2.5) on page 26 of \cite{JLR} with $t=n^{7/4}$ one obtains that with probability at least $1- \exp(-\sqrt{n})$ one has
$f_2\, \le\,  p\binom n 3 +t $ and hence
$$ b_2(\pi_1(Y)) = b_2(Z) \le b_2(Y) \,  \le \, f_2\, \le\,  p\binom n 3 +t \, \le \, n^{2+\epsilon}.$$
This proves the right inequality in (\ref{secondbetti}).

From Theorem \ref{thm1} we know that for $Y\in Y(n,p)$ one has $\cd(\pi_1(Y)) \le 2$ and from statement (A) we obtain that $\cd(\pi_1(Y))\ge 2$ a.a.s., since the group $\pi_1(Y)$ has nontrivial 2-dimensional homology. 
This implies statement (B).

%

\section*{Acknowledgement} 
The authors are thankful to the referees for their helpful comments.

\bibliographystyle{amsalpha}

%
%
%
%
%
%


\end{document}